\newtheorem{thm}{Theorem}[section]
\newtheorem{cor}[thm]{Corollary}
\newtheorem{lem}[thm]{Lemma}
\theoremstyle{definition}
\newtheorem{defn}[thm]{Definition}
\theoremstyle{remark}
\newtheorem{rem}[thm]{Remark}
\numberwithin{equation}{section}
\begin{document}
\title[Hyers--Ulam stability of derivations and linear functions]
{Hyers--Ulam stability of derivations \\ and linear functions}
\author[Z.~Boros]{Zolt\'{a}n Boros}
\address{
Department of Analysis\\
Institute of Mathematics\\
University of Debrecen\\
P. O. Box: 12.\\
Debrecen\\
H--4010\\
Hungary}

\email{boros@math.klte.hu}

\author[E.~Gselmann]{Eszter Gselmann}
\address{
Department of Analysis\\
Institute of Mathematics\\
University of Debrecen\\
P. O. Box: 12.\\
Debrecen\\
H--4010\\
Hungary}

\email{gselmann@math.klte.hu}

\begin{flushright}
{\tt Manuscript\\
\today}
\end{flushright}

\begin{abstract}

\end{abstract}

\thanks{This research has been supported by the Hungarian Scientific Research Fund
(OTKA) grants NK 68040 and K 62316)}
\subjclass{39B82, 39B72}
\keywords{Stability, derivation, linear function}
\maketitle

\section{Introduction and preliminaries}

In this paper $\mathbb{N}, \mathbb{Z}, \mathbb{Q}$ and
$\mathbb{R}$ denotes the set of the
natural (positive integer), the integer, the rational
and the real numbers, respectively.

The stability theory of functional equations
basically deals with the following question:
Is is true that an 'approximate' solution of a functional
equation 'can be approximated' by a solution
of the functional equation in question?
This problem was raised by S.~M.~Ulam (see \cite{Ula64})
and answered (affirmatively) by D.~H.~Hyers
concerning the additive Cauchy equation see \cite{Hye41}.
Since 1941 this result has been extended
and generalized in several ways,
see e.g., Hyers--Isac--Rassias \cite{HIR98}
and the references therein.
Of course, the question of stability
can be raised not only concerning the Cauchy equation
but also in connection with other equations.

The aim of this paper is to examine
the stability of a system of equations
that defines derivations as well as linear functions.

\begin{defn}\label{D1.1}
A function $f:\mathbb{R}\rightarrow\mathbb{R}$ is called an \emph{additive} function
if,
\begin{equation}\label{Eq1.1}
f(x+y)=f(x)+f(y)
\end{equation}
holds for all $x, y\in\mathbb{R}$. Furthermore, we say that an additive
function $f:\mathbb{R}\rightarrow\mathbb{R}$ is a \emph{derivation} if
\begin{equation}\label{Eq1.2}
f(xy)=xf(y)+yf(x)
\end{equation}
is fulfilled for all $x, y\in\mathbb{R}$.
\end{defn}
>From \eqref{Eq1.2} $f(1)=0$ follows, whence every derivation vanishes at the rationals.
Furthermore, it is known that there exist not identically zero derivations, see
Kuczma \cite{Kuc85}.

It is easy to see from the above definition that every derivation
$f:\mathbb{R}\rightarrow\mathbb{R}$ satisfies the equation
\begin{equation}\label{Eq1.3}
f(x^{k})=kx^{k-1}f(x)
\quad
\left(x\in\mathbb{R}\setminus\left\{0\right\}\right)
\end{equation}
for arbitrarily fixed $k\in\mathbb{Z}\setminus\left\{0\right\}$.
Furthermore, the converse is also true, in the following sense:
if $k\in\mathbb{Z}\setminus\left\{0, 1\right\}$ is fixed and
an additive function $f:\mathbb{R}\rightarrow\mathbb{R}$ satisfies
\eqref{Eq1.3}, then $f$ is a derivation, see e.g., Kurepa \cite{Kur64} and
Kannappan--Kurepa \cite{KK70}.

Motivated by a problem of I.~Halperin (1963), Jurkat \cite{Jur65} and
independently, Kurepa \cite{Kur64} proved that every additive function
$f:\mathbb{R}\rightarrow\mathbb{R}$ satisfying
\[
f\left(\frac{1}{x}\right)=\frac{1}{x^{2}}f(x)
\quad
\left(x\in\mathbb{R}\setminus\left\{0\right\}\right)
\]
has to be linear.

In \cite{NH68} A.~Nishiyama and S.~Horinouchi investigated
additive functions $f:\mathbb{R}\rightarrow\mathbb{R}$
satisfying the additional equation
\begin{equation}\label{Eq1.4}
f(x^{n})=cx^{k}f(x^{m})
\quad
\left(x\in\mathbb{R}\setminus\left\{0\right\}\right),
\end{equation}
where $c\in\mathbb{R}$ and $n, m, k\in\mathbb{Z}$ are arbitrarily
fixed. This approach is obviously the common generalization of the
above mentioned results.
In the second part of the paper we will deal with the stability
of this last system of functional equations.
Our main results could serve as a generalization of the
theorems of \cite{NH68}.
However, the aim of the paper is not only to prove a stability
theorem.
In the so--called mixed theory of information it is usual to consider
a functional equation that characterizes the inset measure of information,
see Maksa \cite{Mak81}.
While solving this equation one obtains an additive function satisfying also the
equation
\[
f\left(\frac{1}{x}\right)=-\frac{1}{x^{2}}f(x)
\quad
\left(x\in\mathbb{R}\setminus\left\{0\right\}\right).
\] 
Clearly, this is a particular case of equation \eqref{Eq1.3}
with $ k = -1 \,$.   
Therefore it is rather natural to expect that 
the investigation of the 
stability of the above mentioned equation 
for the inset measure of information 
should be preceded by the verification of 
the stability of the characterization
of derivations by Kannappan and Kurepa.
Thus our results  
can be applied when we investigate
the stability of a functional equation 
characterizing the inset measure of information.

We remark that in Badora \cite{Bad06}, a stability problem for the
system \eqref{Eq1.1}-\eqref{Eq1.2} concerning mappings between
Banach algebras was solved.
In this paper we replace the second equation \eqref{Eq1.2} 
with an equation in a single variable, namely, with 
a member of the family of equations in the form \eqref{Eq1.4}. 
On the other hand, we restrict our considerations to real functions.  

In what follows we will list some
preliminary definitions and
statements that will be used
during the proof of our main result.
These can be found e.g., in Kuczma \cite{Kuc85}.

Let $p\in\mathbb{N}$ .
A function $f:\mathbb{R}^{p}\rightarrow\mathbb{R}$ is
called \emph{$p$--additive} if, for every
$ i \in \{\, 1 \,,\, 2 \,,\, \dots \,,\, p \,\} $
and for every
$x_{1}, \ldots, x_{p}, y_{i}\in\mathbb{R}$
\begin{multline*}
f\left(x_{1}, \ldots, x_{i-1}, x_{i} + y_{i}, x_{i+1}, \ldots, x_{p}\right)
\\
=f\left(x_{1}, \ldots, x_{i-1}, x_{i}, x_{i+1}, \ldots, x_{p}\right)
+f\left(x_{1}, \ldots, x_{i-1}, y_{i}, x_{i+1}, \ldots, x_{p}\right),
\end{multline*}
i.e., $f$ is additive in each of its variables
$x_{i}\in\mathbb{R}$, $i=1, \ldots, p$.
A $2$--additive function is called \emph{biadditive.}

\begin{thm}\label{T1.1}
Let $f:\mathbb{R}^{p}\rightarrow\mathbb{R}$
be a continuous $p$--additive function.
Then there exists a constant $ c \in \mathbb{R} $ 
such that
\[
f(x_{1} \,,\, x_{2} \,,\, \ldots \,,\, x_{p})=
c x_{1} x_{2} \cdots x_{p} 
\] 
for all $ x_{1} \,,\, x_{2} \,,\, \ldots \,,\, x_{p} \in \mathbb{R} $\,.
\end{thm}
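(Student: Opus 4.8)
The plan is to prove the statement by induction on the number of variables $p$, using at each stage the classical fact that a continuous additive function of a single real variable is linear.

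For the base case $p=1$, a continuous $1$-additive function is precisely a continuous solution of the Cauchy equation \eqref{Eq1.1}. Additivity forces $f(qx)=qf(x)$ for every rational $q$ and every $x$, so in particular $f(q)=qf(1)$ for $q\in\mathbb{Q}$; since $\mathbb{Q}$ is dense in $\mathbb{R}$ and $f$ is continuous, this identity extends to $f(x)=f(1)x$ for all $x\in\mathbb{R}$. Thus $c=f(1)$ is the desired constant (this is the classical theorem of Kuczma \cite{Kuc85}).

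For the inductive step, I assume the assertion holds for $p-1$ and let $f:\mathbb{R}^{p}\rightarrow\mathbb{R}$ be continuous and $p$-additive. Fixing the last coordinate $t=x_{p}$, the map $(x_{1},\ldots,x_{p-1})\mapsto f(x_{1},\ldots,x_{p-1},t)$ is $(p-1)$-additive (because $f$ is additive in each of its first $p-1$ variables) and continuous (being a restriction of the continuous $f$). By the induction hypothesis there is a constant, possibly depending on $t$ and which I denote $c(t)$, with
\[
f(x_{1},\ldots,x_{p-1},t)=c(t)\,x_{1}\cdots x_{p-1}.
\]
Evaluating at $x_{1}=\cdots=x_{p-1}=1$ identifies this constant as $c(t)=f(1,\ldots,1,t)$.

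It then remains to determine $c(t)$. The function $t\mapsto c(t)=f(1,\ldots,1,t)$ is additive in $t$ (by additivity of $f$ in its last variable) and continuous (again by continuity of $f$), so the already-established case $p=1$ yields $c(t)=c\,t$ with $c=f(1,\ldots,1)$. Substituting back gives $f(x_{1},\ldots,x_{p})=c\,x_{1}x_{2}\cdots x_{p}$, completing the induction. The only point that requires care, and the step I regard as the crux, is the transfer of continuity: the induction hypothesis applies to the restriction precisely because fixing one coordinate of a continuous function of $p$ variables yields a continuous function of the remaining coordinates, and the same remark is what allows the base case to be reused for the coefficient $c(t)$. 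Everything else is the routine bookkeeping of peeling off one variable at a time.
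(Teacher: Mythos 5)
Your proof is correct. The paper itself gives no argument for this statement --- it is quoted as a known preliminary from Kuczma \cite{Kuc85} --- and your induction on $p$ (peeling off one variable, identifying the slice constant as $c(t)=f(1,\ldots,1,t)$, and reapplying the one-variable case to it) is precisely the standard proof found there, with the continuity-of-restrictions point correctly identified as the only step needing care.
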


\begin{thm}\label{T1.2}
Let $f:\mathbb{R}^{p}\rightarrow\mathbb{R}$ be a $p$--additive
function,
bounded above, or below on a set $T\subset \mathbb{R}^{p}$,
which has positive Lebesgue--measure.
Then $f$ is continuous.
\end{thm}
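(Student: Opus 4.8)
The plan is to argue by induction on $p$, using Fubini's theorem to descend to lower--dimensional slices and to reduce everything to the classical one--variable case. Replacing $f$ by $-f$ if necessary (which is again $p$--additive), I may assume that $f\le M$ on $T$, and I will in fact prove the stronger assertion that $f(x_1,\dots,x_p)=c\,x_1\cdots x_p$ for a constant $c$, which is visibly continuous. For $p=1$ this is the classical theorem: by the Steinhaus theorem $T+T$ has nonempty interior, and since an additive $f$ is $\mathbb{Q}$--homogeneous one has $f\bigl(\tfrac{t+t'}{2}\bigr)=\tfrac12\bigl(f(t)+f(t')\bigr)\le M$, so $f$ is bounded above on an interval; evaluating $f(c_0\pm u)=f(c_0)\pm f(u)\le M$ for small $|u|$ at an interior point $c_0$ gives a two--sided bound near the origin, and a locally bounded additive function is continuous, hence linear.

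For the inductive step, write points of $\mathbb{R}^p$ as $(s,x)$ with $s\in\mathbb{R}$ and $x\in\mathbb{R}^{p-1}$. By Fubini there is a set $S\subset\mathbb{R}$ of positive measure such that for each $s\in S$ the slice $T_s=\{x:(s,x)\in T\}$ has positive $(p-1)$--dimensional measure. For fixed $s\in S$ the function $g_s(x)=f(s,x)$ is $(p-1)$--additive and bounded above by $M$ on $T_s$; the induction hypothesis makes $g_s$ continuous, and Theorem~\ref{T1.1} then gives $g_s(x)=c(s)\,x_2\cdots x_p$ for a constant $c(s)$. Setting $h(s)=f(s,1,\dots,1)$ we have $c(s)=h(s)$ for $s\in S$, while $h$ is additive on all of $\mathbb{R}$ by additivity of $f$ in its first variable.

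It remains to control $h$ and to remove the restriction $s\in S$. Applying Fubini once more, but now slicing off the last $p-1$ coordinates, I can choose a point $x^\ast$ with $x_2^\ast\cdots x_p^\ast\neq0$ for which the set $\{s:(s,x^\ast)\in T\}$ still has positive measure; for such $s$ (which lie in $S$) the identity $h(s)\,x_2^\ast\cdots x_p^\ast=f(s,x^\ast)\le M$ bounds $h$ above or below on a set of positive measure, according to the sign of $x_2^\ast\cdots x_p^\ast$. The one--variable theorem now forces $h(s)=c\,s$. Thus $f(s,x)=c\,s\,x_2\cdots x_p$ for every $s\in S$ and every $x$; for fixed $x$ the difference $s\mapsto f(s,x)-c\,s\,x_2\cdots x_p$ is additive and vanishes on the positive--measure set $S$, hence is identically zero. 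This yields $f(x_1,\dots,x_p)=c\,x_1\cdots x_p$ everywhere, completing the induction.

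The main obstacle is that $p$--additivity means additivity in each variable \emph{separately}, not joint additivity of the vector map, so the direct Steinhaus/averaging trick of the one--variable case cannot be applied to $T$ itself. The work is therefore in the bookkeeping that passes the boundedness hypothesis through the slicing: producing a single slice on which the coefficient function $h$ inherits a one--sided bound on a set of positive measure, and then upgrading the product formula from the slice set $S$ to all of $\mathbb{R}$ via the fact that an additive function vanishing on a set of positive measure vanishes identically.
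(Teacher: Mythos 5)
The paper does not prove Theorem~\ref{T1.2}; it is quoted as a known preliminary from Kuczma \cite{Kuc85}, so there is no internal proof to compare against. Your Fubini--plus--induction argument is the standard route to this result and is sound in structure: the base case via Steinhaus and $\mathbb{Q}$--homogeneity, the slicing that reduces to a $(p-1)$--additive function on almost every slice, the identification of the coefficient $c(s)$ with the additive function $h(s)=f(s,1,\dots,1)$, and the final upgrade from the slice set $S$ to all of $\mathbb{R}$ using the fact that an additive function vanishing on a set of positive measure vanishes identically are all correct.

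There is one inaccurate step: the parenthetical claim that every $s$ with $(s,x^{\ast})\in T$ lies in $S$. That is false as written --- membership of the single point $(s,x^{\ast})$ in $T$ says nothing about whether the whole slice $T_{s}$ has positive $(p-1)$--dimensional measure, and a priori the set $\{s:(s,x^{\ast})\in T\}$ could meet $S$ in a null set. In that case the identity $f(s,x^{\ast})=h(s)\,x_{2}^{\ast}\cdots x_{p}^{\ast}$ and the bound $f(s,x^{\ast})\le M$ would hold on essentially disjoint sets of $s$, and you could conclude nothing about $h$. The repair is cheap: since $\int_{\mathbb{R}\setminus S}\lambda_{p-1}(T_{s})\,ds=0$, the set $T'=T\cap\bigl(S\times\mathbb{R}^{p-1}\bigr)$ has the same measure as $T$; running your second Fubini argument on $T'$ instead of $T$ produces an $x^{\ast}$ with nonzero coordinates for which $\{s:(s,x^{\ast})\in T'\}$ is a positive--measure subset of $S$, and the rest of your argument goes through verbatim. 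With that one--line patch the proof is complete.
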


Given a function $F:\mathbb{R}^{p}\rightarrow\mathbb{R}$,
by the \emph{diagonalization (or trace)} of $F$ we understand the
function
$f:\mathbb{R}\rightarrow\mathbb{R}$ arising from
$F$ by putting all the variables (from $\mathbb{R}$)
equal:
\[
f(x)=F(x, \ldots, x).
\quad
\left(x\in\mathbb{R}\right)
\]

We will also refer to the definition of \emph{the difference operator}
$\Delta_{h}$ with the span $h\in\mathbb{R}$, which is given for a
function $f:\mathbb{R}\rightarrow\mathbb{R}$ by the formula
\[
\Delta_{h}f(x)=f(x+h)-f(x)
\quad
\left(x\in\mathbb{R}\right).
\]
The superposition of several difference operators will be denoted shortly
by
\[
\Delta_{h_{1}h_{2}\ldots h_{p}}f=\Delta_{h_{1}}\Delta_{h_{2}}\ldots \Delta_{h_{p}}f,
\]
where $p\in\mathbb{N}$ and $h_{1}, h_{2}, \ldots, h_{p}\in\mathbb{R}$.

\begin{lem}\label{L1.1}
Let $F:\mathbb{R}^{p}\rightarrow\mathbb{R}$ be a symmetric
$p$--additive function, and let $f:\mathbb{R}\rightarrow\mathbb{R}$
be the diagonalization of $F$.
For every $n\in\mathbb{N}$, $n\geq p$ and for every
$x, h_{1}, \ldots, h_{n}\in\mathbb{R}$ we have
\[
\Delta_{h_{1}\ldots h_{n}}f(x)=\left\{
\begin{array}{lcl}
p!F\left(h_{1}, \ldots, h_{p}\right), &\text{if} & n=p \\
0, &\text{if} & n\geq p.
\end{array}
\right.
\]
\end{lem}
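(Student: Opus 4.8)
The plan is to prove the evaluation case $n=p$, namely $\Delta_{h_1\ldots h_p}f(x)=p!\,F(h_1,\ldots,h_p)$, by induction on the order $p$ of $F$, and then to read off the vanishing case. I read the second alternative in the statement as $n>p$; it is automatic once the evaluation is known, since $p!\,F(h_1,\ldots,h_p)$ does not depend on $x$ and every difference operator annihilates an $x$-independent function. More generally, at each order $q$ the evaluation statement (call it $(\star_q)$) yields $\Delta_{k_1\ldots k_m}g=0$ whenever $m>q$, for $g$ the diagonalization of a symmetric $q$-additive function; this consequence of the lower-order statements will be fed back into the induction. Throughout I use that the operators $\Delta_{h_i}$ are linear and commute, so that $\Delta_{h_1\ldots h_n}=\Delta_{h_2\ldots h_n}\circ\Delta_{h_1}$ and the outer superposition distributes over finite sums.

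For $p=1$ we have $f=F$, whence $\Delta_{h_1}f(x)=f(x+h_1)-f(x)=f(h_1)=F(h_1)=1!\,F(h_1)$, as required.

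For the inductive step, expanding a single difference by the additivity of $F$ in each of its variables and grouping equal terms by symmetry gives
\[
\Delta_{h_1}f(x)=F(x+h_1,\ldots,x+h_1)-F(x,\ldots,x)=\sum_{j=1}^{p}\binom{p}{j}\,F(\underbrace{h_1,\ldots,h_1}_{j},\underbrace{x,\ldots,x}_{\,p-j}),
\]
the $j=0$ term $F(x,\ldots,x)=f(x)$ having cancelled. For $1\le j\le p-1$ the summand $x\mapsto F(h_1,\ldots,h_1,x,\ldots,x)$ (with $j$ frozen entries $h_1$) is precisely the diagonalization of the symmetric $(p-j)$-additive function obtained from $F$ by fixing $j$ of its arguments at $h_1$, while the $j=p$ term $F(h_1,\ldots,h_1)=f(h_1)$ is constant in $x$.

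Now I apply the outer operator $\Delta_{h_2\ldots h_p}$, a superposition of $p-1$ differences, to this sum term by term. The summand $j=1$ is a diagonalization of order $p-1$ receiving $p-1$ differences, so the induction hypothesis $(\star_{p-1})$ returns $(p-1)!\,F(h_1,h_2,\ldots,h_p)$, which times the coefficient $\binom{p}{1}=p$ produces $p!\,F(h_1,\ldots,h_p)$. Each summand with $2\le j\le p-1$ has order $p-j\le p-2$ and receives $p-1>p-j$ differences, so it is annihilated by the vanishing consequence of $(\star_{p-j})$ recorded above, while the constant $j=p$ term is killed already by $\Delta_{h_2}$ (here $p\ge 2$). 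Summing, $\Delta_{h_1\ldots h_p}f(x)=p!\,F(h_1,\ldots,h_p)$, which completes the induction, and the case $n>p$ follows as explained. The step needing the most care is the bookkeeping in the displayed expansion—recognizing each summand as a lower-order diagonalization and checking that the number of remaining differences strictly exceeds its order exactly for the terms that must vanish—whereas the manipulations with $\Delta$ are routine once their linearity and commutativity are granted.
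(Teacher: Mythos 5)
The paper does not prove Lemma \ref{L1.1} at all --- it is quoted as a preliminary result with a citation to Kuczma \cite{Kuc85} --- so there is no in-paper argument to compare against. Your proof is correct and is essentially the standard one: the strong induction on $p$, the binomial expansion of $\Delta_{h_1}f(x)$ via multi-additivity and symmetry, the identification of each summand as a lower-order diagonalization, and the observation that the case $n>p$ (the ``$n\geq p$'' in the second alternative of the statement is evidently a typo for $n>p$) follows because $p$ differences already produce an $x$-independent value, are all sound and complete.
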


We remark that according to Theorem 15.1.1 in Kuczma \cite{Kuc85}, we
have for all $p\in\mathbb{N}$,
\[
\Delta_{h_{1}\ldots h_{p}}f(x)=
\sum_{\varepsilon_{1}, \ldots, \varepsilon_{p}=0}^{1}
(-1)^{p-(\varepsilon_{1}+\ldots +\varepsilon_{p})}
f\left(x+\varepsilon_{1}h_{1}+\ldots+\varepsilon_{p}h_{p}\right).
\]

We will also make use of a result of Kannappan--Kurepa \cite{KK70}.
\begin{thm}\label{T1.3}
Let $f, g:\mathbb{R}\rightarrow\mathbb{R}$ be additive functions and
$n, m\in\mathbb{Z}\setminus\left\{0\right\}$, $n\neq m$.
Suppose that
\[
f(x^{n})=x^{n-m}g(x^{m})
\]
holds for all $x\in\mathbb{R}\setminus\left\{0\right\}$.
Then the functions $F, G:\mathbb{R}\rightarrow\mathbb{R}$ defined by
\[
F(x)=f(x)-f(1)x
\quad
\text{and}
\quad
G(x)=g(x)-g(1)x
\quad
\left(x\in\mathbb{R}\right)
\]
are derivations and $nF(x)=mG(x)$ is fulfilled for all $x\in\mathbb{R}$.
\end{thm}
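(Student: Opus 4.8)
The plan is to strip off the linear parts and then reduce everything to a single identity among symmetric multiadditive functions. First I would substitute $x=1$ into $f(x^{n})=x^{n-m}g(x^{m})$ to obtain $f(1)=g(1)$; writing $c=f(1)=g(1)$ and $F(x)=f(x)-cx$, $G(x)=g(x)-cx$, the linear terms cancel and the hypothesis becomes
\[
F(x^{n})=x^{n-m}G(x^{m})\qquad(x\in\mathbb{R}\setminus\{0\}),
\]
where $F,G$ are additive with $F(1)=G(1)=0$. Replacing $x$ by $1/x$ turns a pair of negative exponents into a pair of positive ones, and interchanging the roles of $f$ and $g$ (equivalently of $n$ and $m$) lets me assume $n>m\geq 1$; the opposite-sign case I expect to be the genuinely delicate one (see below).

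The key observation is that, since $F$ is additive, the map $(h_{1},\dots,h_{n})\mapsto F(h_{1}\cdots h_{n})$ is a symmetric $n$-additive function whose diagonalization is exactly $x\mapsto F(x^{n})$, and likewise $(h_{1},\dots,h_{m})\mapsto G(h_{1}\cdots h_{m})$ is symmetric $m$-additive with diagonal $G(x^{m})$. Hence both sides of the displayed identity are polynomial functions of degree at most $n$ (the factor $x^{n-m}$ being the diagonal of the $(n-m)$-additive function $(t_{1},\dots,t_{n-m})\mapsto t_{1}\cdots t_{n-m}$), and two such functions coincide only if all their symmetric multiadditive components agree. Applying the highest difference operator $\Delta_{h_{1}\dots h_{n}}$ and invoking Lemma~\ref{L1.1}, the top component of the left-hand side is $n!\,F(h_{1}\cdots h_{n})$, while that of the right-hand side is the symmetrised product of the two factors, giving the master identity
\[
n!\,F(h_{1}\cdots h_{n})=(n-m)!\,m!\sum_{\substack{S\subseteq\{1,\dots,n\}\\ |S|=n-m}}\Bigl(\prod_{i\in S}h_{i}\Bigr)\,G\Bigl(\prod_{j\notin S}h_{j}\Bigr).
\]

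From here everything is specialization. Setting $h_{1}=x$ and $h_{2}=\dots=h_{n}=1$ and using $G(1)=0$, only the subsets $S$ missing the index $1$ survive, and a single binomial simplification yields $nF=mG$. Setting instead $h_{1}=x$, $h_{2}=y$, $h_{3}=\dots=h_{n}=1$, the surviving subsets produce $G(xy)$, $xG(y)$ and $yG(x)$ with coefficients $\binom{n-2}{m-2}$ and $\binom{n-2}{m-1}$; substituting $G=\tfrac{n}{m}F$ and using Pascal's rule $\binom{n-2}{m-2}+\binom{n-2}{m-1}=\binom{n-1}{m-1}$ (which makes the two coefficients sum to $n!$) collapses the identity to $F(xy)=xF(y)+yF(x)$. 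Thus $F$ is a derivation by \eqref{Eq1.2}, $G=\tfrac{n}{m}F$ is one too, and $nF=mG$ holds, as required.

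The main obstacle I anticipate is twofold. First, there is the bookkeeping of justifying that the identity, valid only for $x\neq0$, passes to an identity of polynomial functions and that the multiadditive components may legitimately be compared; this I would handle by choosing the spans $h_{i}$ and the base point away from $0$, after which the resulting relation in the $h_{i}$ extends to all reals by additivity. Second, and more seriously, the opposite-sign case $n>0>m$ does not reduce to positive exponents, because then $x^{m}=1/x^{|m|}$ makes $G(x^{m})$ non-polynomial and the difference-operator machinery no longer applies directly; there I would instead aim to verify \eqref{Eq1.3} for a negative exponent directly, in the spirit of the Jurkat--Kurepa treatment of $f(1/x)=f(x)/x^{2}$, and then invoke the cited characterization of derivations.
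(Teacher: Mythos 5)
The statement is quoted in the paper from Kannappan--Kurepa \cite{KK70} without an internal proof, so there is nothing to compare line by line; I judge your proposal on its own terms. Your treatment of the case $\mathrm{sign}(n)=\mathrm{sign}(m)$ is correct and essentially complete: after normalizing to $F(1)=G(1)=0$, the identity $F(x^{n})=x^{n-m}G(x^{m})$ becomes an equality of diagonals of two symmetric $n$--additive functions (this is precisely the function $H$ that the paper itself constructs in the proof of Lemma~\ref{L2.1}), Lemma~\ref{L1.1} upgrades it to your master identity, and the two specializations do yield $nF=mG$ and then $F(xy)=xF(y)+yF(x)$; I checked the binomial bookkeeping, which goes through because $n>m\geq 1$ forces $n\geq 2$ and $n\neq m$ lets you cancel the factor $(n-m)$ at the last step. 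The base-point issue at $x=0$ is harmless, as you note.

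The genuine gap is the mixed-sign case, which you leave as an announced intention rather than an argument. This is not a removable formality. When $\mathrm{sign}(n)\neq\mathrm{sign}(m)$ (and this includes $n=-m$), both of your reductions fail: the substitution $x\mapsto 1/x$ and the swap $(f,n)\leftrightarrow(g,m)$ each preserve the mixed-sign property, so the relation never becomes one between polynomial functions. Your proposed fallback --- verify \eqref{Eq1.3} for a negative exponent directly and invoke the characterization of derivations --- does not apply as stated, because the hypothesis couples two a priori unrelated additive functions $f$ and $g$, whereas \eqref{Eq1.3} and the Kurepa/Kannappan--Kurepa characterization concern a single function composed with itself; you would first need an identity expressing $g$ in terms of $g$ alone (or $f$ in terms of $f$ alone). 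That this case is genuinely delicate is visible in the paper itself: Lemma~\ref{L2.1} is restricted to $\mathrm{sign}(n)=\mathrm{sign}(m)$ or $n=-m$, and the Remark following it concedes that the authors' method breaks down for mixed signs with $n\neq -m$. In the case $n=-m$ the paper's device is the substitution $x\mapsto x(x+1)$, which, combined with additivity and the triangle inequality, produces a single-function relation of the form $2xg(x)=g(x^{2})$ modulo linear terms, amenable to the positive-exponent argument; some such decoupling step is exactly what your sketch is missing, and without it the theorem is proved only for $\mathrm{sign}(n)=\mathrm{sign}(m)$.
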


\section{Inequalities for additive functions}

\begin{lem}\label{L2.1}
Let $f, g:\mathbb{R}\rightarrow\mathbb{R}$ be additive functions,
$n, m\in\mathbb{Z}\setminus\left\{0\right\}$, $n\neq m$ suppose furthermore that
either $n=-m$ or $\mathrm{sign}(n)=\mathrm{sign}(m)$
and assume that there exists an interval $I\subset \mathbb{R}$ with
positive length such that
\begin{equation}\label{Eq2.1}
\left|f(x^{n})-x^{n-m}g(x^{m})\right|\leq K
\end{equation}
holds for all $x\in\mathbb{R}\setminus\left\{0\right\}$ with a certain $K\in\mathbb{R}$.
Then there exist derivations $F, G:\mathbb{R}\rightarrow\mathbb{R}$ such that
$nF(x)=mG(x)$  $\left(x\in\mathbb{R}\right)$ and
\begin{equation}\label{Eq2.2}
f(x)=F(x)+f(1)x
\quad
\left(x\in\mathbb{R}\right),
\end{equation}
\begin{equation}\label{Eq2.3}
g(x)=G(x)+g(1)x
\quad
\left(x\in\mathbb{R}\right).
\end{equation}
\end{lem}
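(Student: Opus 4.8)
The plan is to show that the bound \eqref{Eq2.1} actually forces the exact equation $f(x^{n}) = x^{n-m}g(x^{m})$ to hold for every $x \neq 0$; once this is established, Theorem \ref{T1.3} applied to the additive functions $f,g$ immediately produces the derivations $F(x) = f(x)-f(1)x$ and $G(x) = g(x)-g(1)x$ with $nF = mG$, and \eqref{Eq2.2}--\eqref{Eq2.3} hold by the very definition of $F$ and $G$. Write $\Phi(x) = f(x^{n}) - x^{n-m}g(x^{m})$ for $x \neq 0$. The basic observation is a homogeneity identity: since additive functions are $\mathbb{Q}$--homogeneous, for every rational $r \neq 0$ one has $f((rx)^{n}) = r^{n}f(x^{n})$ and $(rx)^{n-m}g((rx)^{m}) = r^{n}x^{n-m}g(x^{m})$, whence $\Phi(rx) = r^{n}\Phi(x)$. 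Note also that for a rational $q\neq 0$ one computes $\Phi(q) = q^{n}\bigl(f(1)-g(1)\bigr)$, which will fix the relevant constant at the end.

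Consider first the case $\mathrm{sign}(n)=\mathrm{sign}(m)$. Substituting $x\mapsto 1/x$ converts negative exponents into positive ones (the interval $I$ is carried to another interval of positive length once it is shrunk away from $0$), and interchanging the roles of $(f,n)$ and $(g,m)$ — which merely multiplies $\Phi$ by the factor $x^{m-n}$, bounded on $I$ — allows me to assume $n>m>0$. Then $x^{n-m}$ is an ordinary polynomial, and both summands of $\Phi$ are diagonalizations of symmetric multiadditive maps: the assignment $(y_{1},\dots,y_{n})\mapsto f(y_{1}\cdots y_{n})$ is symmetric and $n$--additive with diagonalization $f(x^{n})$, while the analogous $m$--additive construction for $g$, multiplied by $x^{n-m}$, again yields such a diagonalization. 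Hence $\Phi$ is a generalized polynomial of degree at most $n$ (cf. Lemma \ref{L1.1} and the remark following it). Its homogeneous components are recovered by iterated difference operators, and because $\Phi$ is bounded on $I$, each such component is bounded on a box of positive Lebesgue measure; Theorem \ref{T1.2} then renders the associated multiadditive functions continuous, and Theorem \ref{T1.1} identifies each as a monomial. Thus $\Phi$ is an ordinary polynomial, and comparing with $\Phi(rx)=r^{n}\Phi(x)$ forces $\Phi(x)=cx^{n}$. The rational evaluation above gives $c=f(1)-g(1)$, and subtracting $(f(1)-g(1))x^{n}=cx^{n}$ leaves exactly $F(x^{n})=x^{n-m}G(x^{m})$, so Theorem \ref{T1.3} applies.

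The case $n=-m$ is the genuine difficulty and must be handled separately; I expect it to be the main obstacle. Normalizing to $n>0$, the hypothesis reads $|f(x^{n})-x^{2n}g(x^{-n})|\le K$ on $I$, and the term $x^{2n}g(x^{-n})$ is a generalized polynomial in $1/x$ rather than in $x$, so the reasoning of the previous paragraph collapses precisely where Theorems \ref{T1.1}--\ref{T1.2} were used: boundedness on an interval can no longer be converted into continuity of a multiadditive trace in $x$. What survives is the homogeneity $\Phi(rx)=r^{n}\Phi(x)$; since the rational multiples of any fixed point are dense, every $\mathbb{Q}^{\times}$--orbit meets $I$, which upgrades the bound on $I$ to the global growth estimate $|\Phi(x)|\le M|x|^{n}$ for all $x\neq 0$. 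The remaining and decisive step is to couple this estimate with the companion relation produced by $x\mapsto 1/x$, namely $g(x^{n}) = x^{2n}f(x^{-n}) - x^{2n}\Phi(1/x)$, and to play the two reciprocal equations against one another in the spirit of the Jurkat--Kurepa argument so as to force $\Phi\equiv 0$. Once the exact equation is restored, Theorem \ref{T1.3} again yields the derivations $F,G$ with $nF=mG$, completing the proof.
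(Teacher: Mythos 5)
Your handling of the case $\mathrm{sign}(n)=\mathrm{sign}(m)$ is essentially the paper's own argument: both summands of $\Phi$ are traces of symmetric $n$--additive functions, boundedness plus Lemma \ref{L1.1} and Theorems \ref{T1.2} and \ref{T1.1} give $\Phi(x)=cx^{n}$, the rational evaluation gives $c=f(1)-g(1)$, and Theorem \ref{T1.3} finishes. (Your opening sentence, claiming the hypothesis forces the exact equation $f(x^{n})=x^{n-m}g(x^{m})$, is not what you actually prove and is not true; the correct statement, which your second paragraph does reach, is $\Phi(x)=\bigl(f(1)-g(1)\bigr)x^{n}$.) The reductions via $x\mapsto 1/x$ and via swapping the roles of $(f,n)$ and $(g,m)$ are legitimate.

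The case $n=-m$ is a genuine gap. You correctly diagnose why the multiadditive machinery fails there, but what you offer in its place is a statement of intent --- ``play the two reciprocal equations against one another in the spirit of the Jurkat--Kurepa argument so as to force $\Phi\equiv 0$'' --- with no mechanism for doing so. Note also that the target is wrong: $\Phi$ need not vanish; as in the other case the conclusion consistent with \eqref{Eq2.2}--\eqref{Eq2.3} is $\Phi(x)=\bigl(f(1)-g(1)\bigr)x^{n}$, so whatever argument you supply must leave room for that constant. The paper's device, which you would need in some form, is an explicit elimination of $f$: after normalizing the estimate to $\left|f\left(\frac{1}{x}\right)-\frac{1}{x^{2}}g(x)\right|\leq \frac{L}{|x|}$ for $x>0$, substitute $x(x+1)$ for $x$ and use the partial--fraction identity $\frac{1}{x(x+1)}=\frac{1}{x}-\frac{1}{x+1}$ together with the additivity of $f$; the triangle inequality then cancels all occurrences of $f$ and leaves a bound of the form $\left|2xg(x)-g(x^{2})\right|\leq L|x|+L|x+1|+L|x(x+1)|+|x^{2}g(1)|$ for $x>0$. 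This is exactly an instance of the already--settled positive--exponent case (with exponents $2$ and $1$ and $g$ in both roles), which produces the derivation $G$ with $g(x)=G(x)+g(1)x$; the representation of $f$ is then recovered by inserting $G$ back into the original inequality, using $G\left(\frac{1}{x}\right)=-\frac{1}{x^{2}}G(x)$, and invoking the fact that an additive function bounded on an interval of positive length is linear. Without this (or an equivalent) step, the lemma is not proved in the case $n=-m$.
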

\begin{proof}
Firstly, we will show that inequality \eqref{Eq2.1} implies that
there exists $L\in\mathbb{R}$ so that
\begin{equation}\label{Eq2.4}
\left|f(x^{n})-x^{n-m}g(x^{m})\right|\leq L \left|x\right|^{n}
\end{equation}
is fulfilled for all $x\in\mathbb{R}\setminus\left\{0\right\}$.

Let $]a, b[\subset I$. Since the rationals are dense in $\mathbb{R}$, for
every $x\in\mathbb{R}\setminus\left\{0\right\}$ we can find
$r(x)\in\mathbb{Q}$ (a rational number depending only on $x$) such that
$a<r(x) x<b$.
If we replace $x$ by $r(x)x$ in \eqref{Eq2.1}, we obtain that
\[
\left|f(x^{n})-x^{n-m}g(x^{m})\right|\leq K \left|r(x)\right|^{-n},
\]
where we used the fact that every additive function is
$\mathbb{Q}$--homogeneous.
Since $a<r(x)x<b$,
\[
\min\left\{|a|, |b|\right\}< r(x)x < \max\left\{|a|, |b|\right\}.
\]
In case $n>0$, we obtain from this that
\[
\left(\min\left\{|a|, |b|\right\}\right)^{-n} |x|^{n}> |r(x)|^{-n}
\]
and in case $n<0$ we get that
\[
\left(\max\left\{|a|, |b|\right\}\right)^{-n} |x|^{n}> |r(x)|^{-n}.
\]
Therefore, if we define
\[
L=
\left\{
\begin{array}{lcl}
\left(\min\left\{|a|, |b|\right\}\right)^{-n} K, & \text{if} & n>0 \\
\left(\max\left\{|a|, |b|\right\}\right)^{-n} K, & \text{if} & n<0
\end{array}
\right.
\]
we get inequality \eqref{Eq2.4}.

At this point of the proof we have to distinguish several cases.
First suppose that $n, m>0$. Without the loss of generality $n>m$
can be assumed.

Define the function $H$ on $\mathbb{R}^{n}$ by
\begin{multline*}
H(x_{1}, \ldots, x_{n})=
f(x_{1}\cdot\ldots\cdot x_{n}) \\-
\frac{1}{n}x_{1}\cdot\ldots\cdot x_{n-m} g\left(x_{n-m+1}\cdot\ldots\cdot x_{n}\right)-
\frac{1}{n}x_{2}\cdot\ldots\cdot x_{n-m+1} g\left(x_{n-m+2}, \cdot\ldots \cdot x_{n}x_{1}\right)- \\
\ldots -
\frac{1}{n}x_{n}x_{1}\cdot\ldots\cdot x_{n-m-1} g\left(x_{n-m}\cdot\ldots\cdot x_{2}\right).
\end{multline*}
Due to the additivity of the functions $f$ and $g$, the function
$H$ is a symmetric and $n$--additive function, and its trace
\[
H(x, \ldots, x)= f(x^{n})-x^{n-m}g(x^{m}).
\quad
\left(x\in\mathbb{R}\right)
\]
In view of inequality \eqref{Eq2.1}, this yields that
\[
\left|H(x, \ldots, x)\right|=
\left|f(x^{n})-x^{n-m}g(x^{m})\right| \leq
L |x^{n}|,
\quad
\left(x\in\mathbb{R}\setminus\left\{0\right\}\right)
\]
that is, the trace of the function $H$ can be dominated by the term
$L|x^{n}|$.
On the other hand, Lemma \ref{L1.1}.states that the function $H$
is uniquely determined by its trace via the formula
\[
H(h_{1}, \ldots, h_{n})=\frac{1}{n!}\Delta_{h_{1} \ldots h_{n}} H(x, \ldots, x).
\left(x, h_{1}, \ldots, h_{n} \in\mathbb{R}\right)
\]
This yields that the function
$H$ is bounded on a subset of $\mathbb{R}^{n}$
which has positive Lebesgue--measure.
Thus, by Theorem \ref{T1.2}., the function $H$ is continuous on $\mathbb{R}^{n}$.
Therefore, especially,
\[
H(x, \ldots, x)=c x^{n}
\]
holds for all $x\in\mathbb{R}$ with a certain $c\in\mathbb{R}$.
>From this we get that $H(1, \ldots, 1)=c$, on the other hand
by the definition of the function $H$, $H(1, \ldots, 1)= f(1)-g(1)$
follows.
All in all,
\begin{equation}\label{Eq2.6}
\left(f(1)-g(1)\right)x^{n}=H(x, \ldots, x)=
f(x^{n})-x^{n-m}g(x^{m}).
\quad
\left(x\in\mathbb{R}\setminus\left\{0\right\}\right)
\end{equation}
Define the functions $F, G:\mathbb{R}\rightarrow\mathbb{R}$ by
\[
F(x)=f(x)-f(1)x
\quad
\text{and}
\quad
G(x)=g(x)-g(1)x,
\quad
\left(x\in\mathbb{R}\setminus\left\{0\right\}\right)
\]
then from \eqref{Eq2.6} we get that
\[
F(x^{n})=x^{n-m}G(x^{m})
\]
for all $x\in\mathbb{R}\setminus\left\{0\right\}$.
Using Theorem \ref{T1.3}.,
this yields that the functions $F$ and
$G$ are derivations
and $nF(x)=mG(x)$ holds for all $x\in\mathbb{R}$.
This means that equations \eqref{Eq2.2} and \eqref{Eq2.3}
hold in case $n, m>0$.

Secondly assume that $n, m <0$. In this case let us
replace $x$ by $\frac{1}{x}$ in inequality \eqref{Eq2.4}
to obtain
\[
\left|f(x^{-n})-x^{(-n)-(-m)}g(x^{-m})\right|\leq L |x|^{-n}.
\quad
\left(x\in\mathbb{R}\setminus\left\{0\right\}\right)
\]
Since $-n$ and $-m$ are positive integers, the results
of the previous case can be applied.
Therefore there exist derivations $F, G:\mathbb{R}\rightarrow\mathbb{R}$
so that $nF(x)=mG(x)$ $(x\in\mathbb{R})$ and
\[
f(x)=F(x)+f(1)x
\]
and
\[
g(x)=G(x)+g(1)x
\]
holds for all $x\in\mathbb{R}$.

Suppose now that $n=-m$. Then inequality \eqref{Eq2.1} yields that
\begin{equation}\label{Eq2.7}
\left|f(x^{-m})-x^{-2m}g(x^{m})\right|\leq L |x^{-m}|
\end{equation}
holds for all $x\in\mathbb{R}\setminus\left\{0\right\}$.
If we replace $x$ by $x^{1/m}$ $(x>0)$ then inequality \eqref{Eq2.7}
yields that
\begin{equation}\label{Eq2.8}
\left|f\left(\frac{1}{x}\right)-\frac{1}{x^{2}}g(x)\right| \leq \frac{L}{|x|}
\end{equation}
is fulfilled for all $x>0$. Replace in this inequality $x$ by
$x(x+1)$, then
\[
\left|f\left(\frac{1}{x(x+1)}\right)-\frac{1}{x^{2}(x+1)^{2}}g(x(x+1))\right| \leq \frac{L}{|x(x+1)|}
\]
is fulfilled for all $x>0$. After using the additivity of the function f,
\[
\left|f\left(\frac{1}{x}\right)-f\left(\frac{1}{x+1}\right)-\frac{1}{x^{2}(x+1)^{2}}g(x(x+1))\right| \leq \frac{L}{|x(x+1)|}
\quad
\left(x>0\right)
\]
Using the triangle inequality, we obtain that
\begin{multline*}
\left|\frac{1}{x^{2}}g(x)-\frac{1}{(x+1)^{2}}g(x+1)-\frac{1}{x^{2}(x+1)^{2}}g(x^{2}+x)\right|
\\
\leq
\left|f\left(\frac{1}{x}\right)-\frac{1}{x^{2}}g(x)\right|+
\left|f\left(\frac{1}{x+1}\right)-\frac{1}{(x+1)^{2}}g(x+1)\right|
\\
+
\left|f\left(\frac{1}{x(x+1)}\right)-\frac{1}{x^{2}(x+1)^{2}}g(x(x+1))\right|
\\
\leq \frac{L}{|x|}+\frac{L}{|x+1|}+\frac{L}{|x(x+1)|}
\end{multline*}
is satisfied for all $x>0$.
Due to the additivity of the function $g$, after rearranging this inequality,
one can get
\[
\left|2xg(x)-g(x^{2})\right|\leq
L|x|+L|x+1|+L|x(x+1)|+|x^{2}g(1)|
\]
for all $x>0$. At this point the results of the first part of the proof
can be used to derive that there exists a derivation $G:\mathbb{R}\rightarrow\mathbb{R}$ such that
\[
g(x)=G(x)+g(1)x.
\quad
\left(x\in\mathbb{R}\right)
\]
In view of inequality \eqref{Eq2.6}, this yields that
\[
\left|f\left(\frac{1}{x}\right)-\frac{1}{x^{2}}G(x)-\frac{1}{x^{2}}g(1)x\right|\leq \frac{L}{|x|}
\]
is fulfilled for all $x\in\mathbb{R}\setminus\left\{0\right\}$.
Since the function $G$ is a derivation, $-\frac{1}{x^{2}}G(x)=G\left(\frac{1}{x}\right)$
holds for all $x\in\mathbb{R}\setminus\left\{0\right\}$, thus
\[
\left|f\left(\frac{1}{x}\right)+G\left(\frac{1}{x}\right)-\frac{g(1)}{x}\right|\leq \frac{L}{|x|},
\quad
\left(x\in\mathbb{R}\setminus\left\{0\right\}\right)
\]
or if we replace $x$ by $\frac{1}{x}$,
\[
\left|f(x)+G(x)-g(1)x\right|\leq L|x|.
\quad
\left(x\in\mathbb{R}\right)
\]
The functions $f$ and $G$ are additive, therefore the
function $f(x)+G(x)-g(1)x$ is also additive, and this inequality means that
this additive function is bounded on an interval which has positive length. Thus this
function is linear, that is
\[
f(x)+G(x)-g(1)x=cx
\]
holds for all $x\in\mathbb{R}$ with a certain constant $c$. With the
substitution $x=1$, we get however that $c=f(1)-g(1)$. Therefore
\[
f(x)=-G(x)+f(1)x
\]
for all $x\in\mathbb{R}$, that is, equations \eqref{Eq2.2} and \eqref{Eq2.3}
are satisfied in this case, too.
\end{proof}

\begin{rem}
Our proof was not appropriate in case $\mathrm{sign}(n)\neq \mathrm{sign}(m)$  and
$n\neq -m$. We remark that if the functions in the previous lemma fulfill the
condition $\kappa f(x)=g(x)$ for all $x\in\mathbb{R}$ with a real constant $\kappa$, then we
do not have to make any restrictions on the values of $n$ and $m$ and
we can prove the following statement.
\end{rem}

\begin{lem}\label{L2.2}
Let $f:\mathbb{R}\rightarrow\mathbb{R}$ be an additive function, $\kappa\in\mathbb{R}$
$n, m\in\mathbb{Z}\setminus\left\{0, 1\right\}$, $n\neq m$ and assume that
there exists an interval $I\subset \mathbb{R}$ such that
\begin{equation}\label{Eq2.9}
\left|f(x^{n})-\kappa x^{n-m}f(x^{m})\right|\leq K
\end{equation}
holds for all $x\in\mathbb{R}\setminus\left\{0\right\}$
with a certain $K\in\mathbb{R}$.
Then there exists a derivation $F:\mathbb{R}\rightarrow\mathbb{R}$
for which $(n-\kappa m)F(x)=0$  and
\[
f(x)=F(x)+f(1)x
\]
holds for all $x\in\mathbb{R}$.
\end{lem}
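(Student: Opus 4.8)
The plan is to split along the sign pattern of \(n\) and \(m\) and to exploit, throughout, that the single function \(\kappa f\) plays the role of \(g\). First I would dispose of the cases already reachable by Lemma \ref{L2.1}: if \(\mathrm{sign}(n)=\mathrm{sign}(m)\) or \(n=-m\), then with \(g:=\kappa f\) inequality \eqref{Eq2.9} is exactly \eqref{Eq2.1}, so Lemma \ref{L2.1} yields derivations \(F,G\) with \(f=F+f(1)\,\mathrm{id}\), \(g=G+g(1)\,\mathrm{id}\) and \(nF=mG\). Since \(g=\kappa f\) forces \(g(1)=\kappa f(1)\) and hence \(G(x)=\kappa f(x)-\kappa f(1)x=\kappa F(x)\), I get \(nF=mG=\kappa m F\), i.e. \((n-\kappa m)F=0\), which is the assertion in these cases.

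The genuinely new case is \(\mathrm{sign}(n)\neq\mathrm{sign}(m)\) with \(n\neq -m\); by symmetry (replacing \(x\) by \(1/x\) if necessary) I may assume \(n>0>m\), and I may assume \(\kappa\neq0\), the case \(\kappa=0\) being immediate (then \(f(x^{n})\) is bounded, so the trace argument gives \(f(x^{n})=f(1)x^{n}\), whence \(F(x):=f(x)-f(1)x\) vanishes). As in the first part of Lemma \ref{L2.1} I would upgrade \eqref{Eq2.9} to the form \eqref{Eq2.4}, namely \(\lvert f(x^{n})-\kappa x^{n-m}f(x^{m})\rvert\le L\lvert x\rvert^{n}\). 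The key move is to iterate this one relation, which is possible precisely because both occurrences involve the same \(f\): applying it with \(x\) replaced by \(x^{n}\) and by \(x^{m}\) and combining the two estimates by the triangle inequality, the exponent bookkeeping \(n(n-m)+nm=n^{2}\) and \((n+m)(n-m)=n^{2}-m^{2}\) produces
\[
\left\lvert f\!\left(x^{n^{2}}\right)-\kappa^{2}x^{\,n^{2}-m^{2}}f\!\left(x^{m^{2}}\right)\right\rvert\le(1+\lvert\kappa\rvert)L\lvert x\rvert^{n^{2}}\qquad(x\neq0).
\]
Crucially, using the upgraded bound \(L\lvert x\rvert^{n}\) (rather than the raw constant) keeps the error term at the right order \(\lvert x\rvert^{n^{2}}\) instead of blowing up. This is an inequality of the form \eqref{Eq2.4} for the additive functions \(f\) and \(\kappa^{2}f\) with the two \emph{positive} exponents \(n^{2}\) and \(m^{2}\), and \(n^{2}\neq m^{2}\) because \(n\neq\pm m\). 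Repeating the argument of the first part of Lemma \ref{L2.1} from \eqref{Eq2.4} onward (inverting the inequality via \(\kappa\neq0\) if \(m^{2}>n^{2}\) so that the larger exponent comes first) and invoking Theorem \ref{T1.3}, I conclude that \(F(x)=f(x)-f(1)x\) is a derivation.

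Once \(F\) is known to be a derivation in every case, I would finish uniformly using the exact identity \eqref{Eq1.3}, \(F(x^{k})=kx^{k-1}F(x)\) for \(k\in\mathbb{Z}\setminus\{0\}\). Substituting \(f=F+f(1)\,\mathrm{id}\) into \eqref{Eq2.9} gives
\[
\left\lvert(n-\kappa m)x^{\,n-1}F(x)+(1-\kappa)f(1)x^{n}\right\rvert\le K\qquad(x\neq0).
\]
Evaluating along the nonzero rationals, where \(F=0\), forces \((1-\kappa)f(1)=0\) (otherwise \(\lvert x\rvert^{n}\) would be bounded over \(\mathbb{Q}\setminus\{0\}\)); the inequality then reduces to \(\lvert(n-\kappa m)x^{n-1}F(x)\rvert\le K\). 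If \(n-\kappa m\neq0\), then \(x^{n-1}F(x)\) is bounded, so \(F\) is bounded on \([1,2]\); being additive and bounded on a set of positive measure it is linear, and a linear derivation is identically zero, so \((n-\kappa m)F=0\) trivially holds. If \(n-\kappa m=0\) the conclusion is immediate. This yields \((n-\kappa m)F(x)=0\) and \(f(x)=F(x)+f(1)x\) for all \(x\), as claimed.

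The \emph{main obstacle} is exactly the mixed-sign case of the middle paragraph: since \(f(x^{m})=f(1/x^{-m})\) is not the trace of any multiadditive polynomial, the direct approach of Lemma \ref{L2.1} collapses, and the substitution tricks that rescue the case \(n=-m\) do not generalize cleanly. The decisive idea — available only under the single-function hypothesis \(g=\kappa f\) — is that the relation can be \emph{composed with itself} to square the exponents and thereby restore a same-sign configuration, while a careful use of the \(\lvert x\rvert^{n}\) estimate keeps the accumulated error exactly at order \(\lvert x\rvert^{n^{2}}\); verifying that order is the one computation I would double-check before invoking Lemma \ref{L2.1}.
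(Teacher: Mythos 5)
Your proposal is correct and follows essentially the same route as the paper: dispose of the same-sign and $n=-m$ cases via Lemma \ref{L2.1} with $g=\kappa f$, and in the mixed-sign case compose the inequality with itself (substituting $x^{n}$ and $x^{m}$) to pass to the positive exponents $n^{2}\neq m^{2}$ with error still of order $|x|^{n^{2}}$, then re-enter the proof of Lemma \ref{L2.1}. Your final step is a small but genuine improvement: deriving $(n-\kappa m)F=0$ directly by substituting $f=F+f(1)\,\mathrm{id}$ back into \eqref{Eq2.9} is cleaner than the paper's appeal to the relation $nF=mG$ from Lemma \ref{L2.1}, which in the mixed-sign case only yields $(n^{2}-\kappa^{2}m^{2})F=0$ and hence leaves the case $n+\kappa m=0$ unaddressed.
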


\begin{proof}
In view of Lemma \ref{L2.1}. we have to only deal with the
case $\mathrm{sign}(n)\neq\mathrm{sign}(m)$ and $n\neq -m$.
Furthermore, due the proof the previous lemma, inequality
\eqref{Eq2.9} is equivalent to the following inequality
\[
\left|f(x^{n})-\kappa x^{n-m}f(x^{m})\right|\leq L |x^{n}|
\]
for all $x\in\mathbb{R}\setminus\left\{0\right\}$, where
$L$ is a certain real constant.
Let us substitute $x^{n}$ in place of $x$ into this
inequality,
\[
\left|f(x^{n^{2}})-\kappa x^{n(n-m)}f(x^{nm})\right|\leq L |x^{n^{2}}|.
\quad
\left(x\in\mathbb{R}\setminus\left\{0\right\}\right)
\]
Additionally, the above inequality with the substitution
$x^{m}$ yields
\[
\left|f(x^{n m})-\kappa x^{m(n-m)}f(x^{m^{2}})\right|\leq L |x^{nm}|.
\quad
\left(x\in\mathbb{R}\setminus\left\{0\right\}\right)
\]
This last two inequalities and the triangle inequality
imply that
\begin{multline*}
\left|f\left(x^{n^{2}}\right)-\kappa^{2} x^{n^{2}-m^{2}}f\left(x^{m^{2}}\right)\right| \\
\leq
\left|f(x^{n^{2}})-\kappa x^{n(n-m)}f(x^{nm})\right|+
\left|\kappa x^{n(n-m)}\right| \cdot
\left|f(x^{n m})-\kappa x^{m(n-m)}f(x^{m^{2}})\right| \\
=L\left|x^{n^{2}}\right|+L\left|\kappa x^{n(n-m)}x^{nm}\right|=
L\left(1+\left|\kappa\right|\right) \cdot \left|x^{n^{2}}\right|
\end{multline*}
holds for all $x\in\mathbb{R}\setminus\left\{0\right\}$ .
Let us observe that $n^{2}, m^{2}>0$ and the results of
Lemma \ref{L2.1}. can be applied
(with the choice $g(x)=\kappa f(x)$) to obtain that
\[
f(x)=F(x)+f(1)x
\]
holds for all $x\in\mathbb{R}$, where $F:\mathbb{R}\rightarrow\mathbb{R}$
is a derivation which also satisfies $(n-\kappa m)F(x)=0$
for arbitrary $x\in\mathbb{R}$.
\end{proof}

>From this lemma the following statement can be concluded immediately.

\begin{cor}
Let $f:\mathbb{R}\rightarrow\mathbb{R}$ be an additive function and
$r\in\mathbb{Q}\setminus\left\{0, 1\right\}$. Assume that there exists an
interval $I\subset \mathbb{R}$ with positive length such that
\[
\left|f\left(x^{r}\right)-rx^{r-1}f(x)\right|\leq K
\]
holds for all $x\in I$ with a certain $K\in\mathbb{R}$.
Then there exists a derivation $F:\mathbb{R}\rightarrow\mathbb{R}$
such that
\[
f(x)=F(x)+f(1)x
\]
is fulfilled for all $x\in\mathbb{R}$.
\end{cor}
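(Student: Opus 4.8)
The plan is to deduce the corollary from Lemma \ref{L2.2} by a change of variables that replaces the fractional power $x^{r}$ by an integer power. Write $r=a/b$ in lowest terms, with $b\in\mathbb{N}$ and $a\in\mathbb{Z}\setminus\{0\}$; note $a\neq b$ because $r\neq 1$. Since $x^{r}$ is canonically defined on the positive half-line, I would first reduce to the case $I\subset ]0,\infty[$ (replacing $I$ by its positive part, which still has positive length). Then I would set
\[
n=2a,\qquad m=2b,\qquad \kappa=r.
\]
Both $n$ and $m$ are even and nonzero, hence $n,m\in\mathbb{Z}\setminus\{0,1\}$, and $n\neq m$ since $a\neq b$; thus the arithmetic hypotheses of Lemma \ref{L2.2} are satisfied.

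Next I would perform the substitution $x=t^{m}=t^{2b}$ with $t>0$. As $x$ ranges over $I$, the parameter $t=x^{1/(2b)}$ ranges over an interval $J$ of positive length. Using $2b\cdot r=2a$ one computes $x^{r}=t^{2br}=t^{2a}=t^{n}$, together with $x=t^{m}$ and $x^{r-1}=t^{2b(r-1)}=t^{2a-2b}=t^{n-m}$. Consequently the assumed estimate $|f(x^{r})-rx^{r-1}f(x)|\le K$ transforms into
\[
\left|f\left(t^{n}\right)-\kappa\, t^{n-m}f\left(t^{m}\right)\right|\leq K
\qquad\left(t\in J\right),
\]
which is exactly the hypothesis \eqref{Eq2.9} of Lemma \ref{L2.2} with the dummy variable renamed.

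Applying Lemma \ref{L2.2} then produces a derivation $F:\mathbb{R}\rightarrow\mathbb{R}$ with $(n-\kappa m)F(x)=0$ and $f(x)=F(x)+f(1)x$ for all $x\in\mathbb{R}$, which is the desired conclusion. Here $n-\kappa m=2a-r\cdot 2b=2a-2a=0$, so the relation $(n-\kappa m)F=0$ holds vacuously and carries no extra information; the content is precisely the representation $f=F+f(1)\,\mathrm{id}$. I expect the only delicate points to be bookkeeping rather than analysis: one must pick the integers $n,m$ so as to dodge the forbidden values $\{0,1\}$ and the equality $n=m$ — the factor $2$ in $n=2a$ and $m=2b$ is exactly what guarantees this — and one must keep the change of variables inside an interval of positive length on which $x^{r}$ is defined, i.e.\ on $]0,\infty[$. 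Once these bookkeeping choices are made, the statement follows immediately from Lemma \ref{L2.2}.
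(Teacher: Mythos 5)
Your proof is correct and supplies precisely the deduction that the paper leaves unproved (it states only that the corollary follows from Lemma \ref{L2.2} ``immediately''): writing $r=a/b$ and substituting $x=t^{2b}$ turns the hypothesis into inequality \eqref{Eq2.9} with $n=2a$, $m=2b$, $\kappa=r$, and the doubling is exactly the device needed to keep $n,m$ outside $\{0,1\}$ and to preserve $n\neq m$, while $n-\kappa m=0$ makes the extra conclusion of Lemma \ref{L2.2} vacuous, as you note. The only convention you rely on is that $x^{r}$ is read on $]0,\infty[$ so that $I$ may be taken positive, which is the natural reading of the statement.
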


\section{Stability of derivations and linear functions}

As a starting point of the proof of the main result of this section
the theorem of Hyers will be used. Originally this
statement was formulated in terms of functions that
are acting between Banach spaces, see Hyers \cite{Hye41}.
However, we will use this theorem only in the particular case 
when the domain and 
the range are the set of reals.
In this setting the proposition 
of Hyers' theorem is the following.

\begin{thm}\label{T3.1}
Let $\varepsilon\geq 0$ and suppose that the function
$f:\mathbb{R}\rightarrow\mathbb{R}$ fulfills the inequality
\[
\left|f(x+y)-f(x)-f(y)\right|\leq \varepsilon
\]
for all $x\in\mathbb{R}$. Then there exists an additive
function $a:\mathbb{R}\rightarrow\mathbb{R}$ such that
\[
\left|f(x)-a(x)\right|\leq \varepsilon
\]
holds for arbitrary $x\in\mathbb{R}$.
\end{thm}

Applying this theorem and our results in the previous section, 
we can establish our main result. 

\begin{thm}\label{T3.2}
Let $\varepsilon_{1}, \varepsilon_{2}\geq 0$, $\kappa\in\mathbb{R}$,
$n, m\in\mathbb{Z}\setminus\left\{0\right\}$, $n\neq m$ and
assume that the function $f:\mathbb{R}\rightarrow\mathbb{R}$
fulfills the 
inequalities
\begin{equation}\label{Eq3.1}
\left|f(x+y)-f(x)-f(y)\right|\leq \varepsilon_{1}
\end{equation}
\and
\begin{equation}\label{Eq3.2}
\left|f(x^{n})-\kappa x^{n-m}f(x^{m})\right|\leq \varepsilon_{2}
\end{equation}
for all $x\in\mathbb{R}\setminus\left\{0\right\}$.
Then there exist a derivation $F:\mathbb{R}\rightarrow\mathbb{R}$
and $\lambda\in\mathbb{R}$ such that
\[
(n - \kappa m) F(x) = 0 
\]
and
\begin{equation}\label{Eq3.3}
\left|f(x)-\left[F(x)+\lambda x\right]\right|\leq \varepsilon_{1}
\end{equation}
are satisfied for all $x\in\mathbb{R}$.
\end{thm}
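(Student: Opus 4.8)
The plan is to first replace the approximately additive $f$ by a genuinely additive function and then feed that function into the rigidity results of Section~2.

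Since $f$ satisfies \eqref{Eq3.1}, Theorem \ref{T3.1} furnishes an additive function $a:\mathbb{R}\to\mathbb{R}$ with $\left|f(x)-a(x)\right|\le\varepsilon_{1}$ for every $x\in\mathbb{R}$. Because the desired estimate \eqref{Eq3.3} has right-hand side $\varepsilon_{1}$, I intend to take $\lambda=a(1)$ at the very end, so that \eqref{Eq3.3} becomes exactly this Hyers estimate. It then remains only to show that $a$ splits as $a=F+a(1)\,\mathrm{id}$ for some derivation $F$ with $(n-\kappa m)F=0$.

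Next I would transfer the power-type estimate \eqref{Eq3.2} from $f$ to $a$. From the decomposition
\[
a(x^{n})-\kappa x^{n-m}a(x^{m})=\bigl(a(x^{n})-f(x^{n})\bigr)+\bigl(f(x^{n})-\kappa x^{n-m}f(x^{m})\bigr)+\kappa x^{n-m}\bigl(f(x^{m})-a(x^{m})\bigr)
\]
together with the Hyers estimate and \eqref{Eq3.2} one obtains
\[
\left|a(x^{n})-\kappa x^{n-m}a(x^{m})\right|\le\varepsilon_{1}+\varepsilon_{2}+|\kappa|\,|x|^{n-m}\varepsilon_{1}.
\]
The hard part is precisely the factor $x^{n-m}$: it is unbounded, so this is not a global constant bound and the lemmas cannot be quoted blindly. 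The way out is that Lemmas \ref{L2.1} and \ref{L2.2} only need a bound on an interval (their proofs recover a global $L\left|x\right|^{n}$ bound from such a hypothesis via the density-of-rationals rescaling). Restricting $x$ to a fixed compact interval $I\subset\mathbb{R}\setminus\{0\}$, the quantity $|x|^{n-m}$ is bounded there by some $M$, and hence $\left|a(x^{n})-\kappa x^{n-m}a(x^{m})\right|\le\varepsilon_{1}(1+|\kappa|M)+\varepsilon_{2}=:K$ for all $x\in I$, which is exactly the form of hypothesis those lemmas are built to exploit.

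Finally I would apply the rigidity results to the additive functions $a$ and $g:=\kappa a$; note that $G:=g-g(1)\,\mathrm{id}=\kappa F$. If $\mathrm{sign}(n)=\mathrm{sign}(m)$ or $n=-m$, Lemma \ref{L2.1} applies and gives derivations $F,G$ with $a=F+a(1)\,\mathrm{id}$ and $nF=mG=\kappa mF$, that is $(n-\kappa m)F=0$. In the complementary case $\mathrm{sign}(n)\ne\mathrm{sign}(m)$ and $n\ne-m$, the relation $g=\kappa a$ is exactly the extra structure that Lemma \ref{L2.2} assumes, and it yields the same conclusion. Taking $\lambda=a(1)$ then completes the argument, since $F(x)+\lambda x=a(x)$ and $\left|f(x)-a(x)\right|\le\varepsilon_{1}$. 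The one remaining piece of bookkeeping is the edge case $|n|=1$ or $|m|=1$ in the opposite-sign situation, where Lemma \ref{L2.2} is stated only for $n,m\ne 0,1$; there I would rerun its squaring step to reach the positive exponents $n^{2},m^{2}$, apply Lemma \ref{L2.1}, and then recover $(n-\kappa m)F=0$ by substituting $a=F+a(1)\,\mathrm{id}$ back into the transported estimate and invoking that a derivation which is simultaneously linear must vanish.
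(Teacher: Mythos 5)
Your argument is correct and follows the paper's own proof essentially step for step: Hyers' theorem produces the additive function $a$, the identical triangle--inequality decomposition transfers \eqref{Eq3.2} to $a$, the lemmas of Section~2 (applied with $g=\kappa a$, so that $G=\kappa F$ and $nF=mG$ gives $(n-\kappa m)F=0$) yield the splitting $a=F+a(1)\,\mathrm{id}$, and $\lambda=a(1)$ closes the estimate. If anything you are more careful than the paper, which quotes Lemma \ref{L2.2} directly even though that lemma excludes $n=1$ and $m=1$ while Theorem \ref{T3.2} does not, and which does not comment on the unboundedness of the factor $|\kappa x^{n-m}|$; your restriction to a compact interval away from the origin and your patch for the $|n|=1$ or $|m|=1$ edge case (squaring, Lemma \ref{L2.1}, then back--substitution) supply exactly the missing details.
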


\begin{proof}
Due the theorem of Hyers, inequality \eqref{Eq3.1}
immediately implies that there exists an additive function
$a:\mathbb{R}\rightarrow\mathbb{R}$ satisfying 
\begin{equation}\label{Eq3.4}
\left|f(x)-a(x)\right|\leq \varepsilon_{1}
\end{equation}
for all $x\in\mathbb{R}$.
In view of inequality \eqref{Eq3.2} this implies that
\begin{multline*}
\left|a(x^{n})-\kappa x^{n-m}a(x^{m})\right| \\
\leq
\left|a(x^{n})-f(x^{n})\right|+
\left|\kappa x^{n-m}\right|\cdot \left|a(x^{m})-f(x^{m})\right|+
\left|f(x^{n})-\kappa x^{n-m}f(x^{m})\right|
\\
\leq \varepsilon_{1}+\left|\kappa x^{n-m}\right|\varepsilon_{1}+\varepsilon_{2}=
\left(1+\left|\kappa x^{n-m}\right|\right)\varepsilon_{1}+\varepsilon_{2}
\end{multline*}
is fulfilled for all $x\in\mathbb{R}\setminus\left\{0\right\} \,$.  
Thus the expression $\left|a(x^{n})-\kappa x^{n-m}a(x^{m})\right|$ is
bounded on a real interval with non-void interior.
Therefore Lemma \ref{L2.2}. yields that there exists a derivation
$F:\mathbb{R}\rightarrow\mathbb{R}$ 
such that $ (n-\kappa m) F(x) = 0 $ and
\[
a(x)=F(x)+a(1) x
\]
holds for all $x\in\mathbb{R}$.
This, together with \eqref{Eq3.4}, implies \eqref{Eq3.3}
with $ \lambda = a(1) \,$.
\end{proof}

Let us note that our result serves as a stability theorem for 
linear functions if $ n \neq \kappa m \,$.

\end{document}